\newcommand{\rd}{\,\mathrm{d}}
\numberwithin{equation}{section}
\newtheorem{theorem}{Theorem}[section]
\newtheorem{lemma}[theorem]{Lemma}
\def\bx{{\bf x}}
\def\by{{\bf y}}
\def\cM{\mathcal{M}}
\def\supp{\textnormal{supp\,}}
\begin{document}

\title{Break of radial symmetry for a class of attractive-repulsive interaction energy minimizers}

\author{ Ruiwen Shu\footnote{Department of Mathematics, University of Georgia, Athens, GA 30602 (ruiwen.shu@uga.edu).}}

\maketitle

\abstract{Break of radial symmetry for interaction energy minimizers is a phenomenon where a radial interaction potential whose associated energy minimizers are never radially symmetric. Numerically, it has been frequently observed  for various types of interaction potentials, however,  rigorous justification of this phenomenon was only done in very limited cases. We propose a new approach to prove  the break of radial symmetry, by using a lower bound for the energy in the class of radial probability measures, combining with the construction of a probability measure whose energy is lower than this lower bound. In particular, we prove that for a class of interaction potentials that are repulsive at short distance and attractive at long distance, every energy minimizer is necessarily a H\"older continuous function which is not radially symmetric.}

\section{Introduction}

In this paper we study the minimizers of the interaction energy 
\begin{equation}\label{E}
	E[\rho] = \frac{1}{2}\int_{\mathbb{R}^d}\int_{\mathbb{R}^d}W(\bx-\by)\rd{\rho(\by)}\rd{\rho(\bx)}\,,
\end{equation}
where $\rho\in \cM(\mathbb{R}^d)$ is a probability measure. $W:\mathbb{R}^d\rightarrow \mathbb{R}\cup\{\infty\}$ is an interaction potential, which satisfies the basic assumption
\begin{equation}\begin{split}
		\text{{\bf (W0)}: $W$} & \text{ is locally integrable, bounded from below} \\ & \text{and lower-semicontinuous, with $W(\bx)=W(-\bx)$.}
\end{split}\end{equation}
As a consequence, $E[\rho]$ is always well-defined for any $\rho\in \cM(\mathbb{R}^d)$, taking values in $\mathbb{R}\cup \{\infty\}$.

The existence of minimizers of $E$ is well-understood: (see also similar results in \cite{CCP15,CFT15})
\begin{theorem}[\cite{SST15}]\label{thm_exist2}
	Assume {\bf (W0)}. Assume $\lim_{|\bx|\rightarrow\infty}W(\bx)=:W_\infty$ is a real number or $\infty$. If there exists $\rho\in \cM(\mathbb{R}^d)$ such that $E[\rho]< \frac{1}{2}W_\infty$, i.e., 
	\begin{equation}\label{thm_exist2_1}
		\inf_{\rho\in\cM(\mathbb{R}^d)}E[\rho] < \frac{1}{2}W_\infty\,,
	\end{equation}
	then there exists a minimizer of $E$.
\end{theorem}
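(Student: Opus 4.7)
The plan is to apply the direct method of the calculus of variations, the essential subtlety being the control of mass escaping to infinity, which is precisely what hypothesis \eqref{thm_exist2_1} is designed to rule out. I fix a minimizing sequence $\{\rho_n\}\subset \cM(\mathbb{R}^d)$ with $E[\rho_n] \to e_0 := \inf_{\rho}E[\rho]$. Since both $E$ and the constraint $\rho(\mathbb{R}^d)=1$ are translation invariant, I first use a concentration-compactness style normalization: replace each $\rho_n$ by a suitable translate so the sequence is ``as concentrated as possible,'' e.g.\ so that $\rho_n(B_R(0))$ is (nearly) maximal over translates for a conveniently chosen radius $R$. Passing to a subsequence, extract a vague/weak-$*$ limit $\rho_n \rightharpoonup \rho$ in the space of nonnegative Radon measures, and set $m := \rho(\mathbb{R}^d)\in[0,1]$.

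Next I verify weak lower semicontinuity of $E$: since $W$ is l.s.c.\ and bounded below, write $W = \lim_{M\to\infty} W_M$ with $W_M = \min(W,M)$ an increasing limit of bounded-below l.s.c.\ functions, and apply the portmanteau-type argument to the product measures $\rho_n\otimes\rho_n \rightharpoonup \rho\otimes\rho$ (on compact sets), then let $M\to\infty$ by monotone convergence, to obtain $E[\rho]\le \liminf_n E[\rho_n] = e_0$. The crucial remaining point is to exclude $m<1$. If a mass fraction $1-m>0$ escapes to spatial infinity, then using $W(\bx-\by)\ge W_\infty - \varepsilon$ whenever $|\bx-\by|$ is sufficiently large, a careful bookkeeping of the four pairwise interactions between the ``near'' and ``far'' parts of $\rho_n$ yields
\begin{equation*}
	e_0 = \lim_{n\to\infty} E[\rho_n] \;\ge\; E[\rho] + W_\infty\, m(1-m) + \tfrac{1}{2}W_\infty(1-m)^2.
\end{equation*}
Using the quadratic scaling $E[\rho] = m^2 E[\rho/m] \ge m^2 e_0$ when $m>0$ (and handling $m=0$ directly via the same formula), the right-hand side is at least $m^2 e_0 + \tfrac{1}{2}W_\infty(1-m^2)$. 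Rearranging gives $(1-m^2)e_0 \ge \tfrac{1}{2}W_\infty(1-m^2)$, so if $m<1$ one obtains $e_0 \ge \tfrac{1}{2}W_\infty$, contradicting \eqref{thm_exist2_1}. Hence $m=1$, $\rho$ is a probability measure, and by the lower semicontinuity established above it is a minimizer.

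The main obstacle will be making the mass-loss estimate rigorous when $W$ is merely l.s.c.\ and allowed to take the value $+\infty$, and when $W_\infty$ is only approached as $|\bx|\to\infty$ rather than uniformly. The cleanest implementation is to work with the truncation $W_M=\min(W,M)$, split $\rho_n$ via a cutoff on a ball $B_{R_n}$ with $R_n\to\infty$ chosen slowly enough that the cross-interaction pairs $(\bx,\by)$ satisfy $|\bx-\by|\to\infty$ along the sequence, estimate each of the four product-measure integrals, and finally send $R_n\to\infty$, then $M\to\infty$, then $\varepsilon\to 0$. The dichotomy case of concentration-compactness (mass splitting into two groups whose centers diverge) is handled by the same device, since the cross interaction between the two groups again pays a factor of $W_\infty$ in the limit; the vanishing case is similarly impossible because it would force $E[\rho_n]\to\tfrac{1}{2}W_\infty$ directly.
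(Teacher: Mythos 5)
The paper itself does not prove this theorem --- it imports it from \cite{SST15} --- and your overall strategy (concentration compactness with a translation normalization, truncation $W_M=\min(W,M)$ for lower semicontinuity, and the quadratic scaling $E[\mu]\ge \mu(\mathbb{R}^d)^2 e_0$) is exactly the standard one used in that reference. However, your central displayed inequality contains an unjustified term. The bound $W(\bx-\by)\ge W_\infty-\varepsilon$ applies to near--far pairs (after the two-radius/annulus refinement you allude to), but \emph{not} to far--far pairs: two points that both lie outside $B(0;R_n)$ need not be far from each other, since the escaping mass can travel to infinity as a single tight clump. Hence the term $\tfrac12 W_\infty(1-m)^2$, which purports to bound the self-energy of the far part from below, fails precisely in the dichotomy scenario --- and this is not a marginal case, because under the standing hypothesis $e_0<\tfrac12 W_\infty$ the correct lower bound for that piece, namely $(1-m)^2e_0$ obtained from scaling, can be strictly smaller than $\tfrac12 W_\infty(1-m)^2$.

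The argument is repairable along standard lines, and your closing remarks gesture at the repair without carrying it out: bound the far--far self-energy by $(1-m)^2e_0+o(1)$ via scaling, so that for $0<m<1$ one gets $e_0\ge m^2e_0+(W_\infty-\varepsilon)\,m(1-m)+(1-m)^2e_0$, hence $2m(1-m)\,e_0\ge (W_\infty-\varepsilon)\,m(1-m)$ and, letting $\varepsilon\to 0$, $e_0\ge\tfrac12W_\infty$, a contradiction. The case $m=0$ must then be excluded separately by the vanishing argument: if $\sup_{\by}\rho_n(B(\by;R))\to0$ for every $R$, then the pairs with $|\bx-\by|\le R$ carry asymptotically no $\rho_n\otimes\rho_n$-mass, so $\liminf_n E[\rho_n]\ge\tfrac12W_\infty$, again a contradiction (with the obvious modification when $W_\infty=\infty$). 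As written, your single inequality conflates these two escape mechanisms and does not hold in general; the trichotomy must be treated case by case, after which the proof goes through.
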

The uniqueness of minimizers is also studied by many recent works \cite{Lop19,CS21,ST,DLM1,DLM2,Fra}, mainly using convexity theory. We say $W$ has the linear interpolation convexity (LIC), if for any distinct compactly-supported $\rho_0,\rho_1\in\cM(\mathbb{R}^d)$ such that $\int_{\mathbb{R}^d}\bx\rho_0(\bx)\rd{\bx}=\int_{\mathbb{R}^d}\bx\rho_1(\bx)\rd{\bx}$, the function $t\mapsto E [(1-t)\rho_0+t\rho_1]$ is strictly convex on $t\in [0,1]$. For LIC potentials, the energy minimizer is necessarily unique (up to translation), and the LIC property is closely related to the positivity of $\hat{W}$, the Fourier transform of $W$. Based on convexity theory, explicit formulas for the minimizer are also obtained for some power-law potentials, which include minimizers as a locally integrable function \cite{CH17,CS21,Fra}, spherical shells \cite{BCLR13_2,DLM1,FM} and discrete measures \cite{DLM2}.

In this paper we focus on the \emph{break of radial symmetry} of energy minimizers, i.e., the following question:
\begin{quote}
	Let $d\ge 2$. If $W$ is radial, when can one guarantee that no minimizer of $E$ is  radially symmetric?
\end{quote}
Numerical experiments have shown that the break of radial symmetry is not at all a rare phenomenon \cite{BKSUB,KSUB,CS21}, and one can observe the formation of clusters or fractal structures for minimizers. On the contrast, only a few previous works give rigorous proof of the break of radial symmetry, which include
\begin{itemize}
	\item \cite{CFP17} showed that if $W(\bx)\sim -|\bx|^{b}$ with $b>2$ for small $|\bx|$, then any minimizer is a finite linear combination of Dirac masses. This result is proved by a delicate local variational argument. In particular, as long as one can guarantee that a single Dirac mass is not a minimizer, then no minimizer is radially symmetric. 
	\item \cite{DLM2} showed that for the power-law potential $W(\bx)=\frac{|\bx|^a}{a}-\frac{|\bx|^b}{b}$, if $a>b$, $a\ge 4$, $b\ge 2$, $(a,b)\ne (4,2)$, then the unique energy minimizer (up to translation and rotation) is the Dirac masses with equal mass located at the vertices of a unit simplex. This is proved by a careful study of the borderline case $(a,b)= (4,2)$ (which has a non-strict LIC and infinitely many minimizers), combined with a comparison argument for the energies with different $a,b$.
	\item \cite{BCHC} showed that for the power-law potential $W(\bx)=\frac{|\bx|^a}{a}-\frac{|\bx|^b}{b}$, for every fixed $b<0$, if $d$ and $a$ are sufficiently large, then no minimizer is radially symmetric. This was obtained by considering the asymptotic limit $a\rightarrow \infty$ in the sense of $\Gamma$-convergence, and justifying the asymmetry in the limiting case via comparison arguments. The largeness requirement of $a$ is not known in a  quantitative sense since the asymptotic limit $a\rightarrow \infty$ is not.
\end{itemize}
To the best of the author's knowledge, there is no previously-known explicit example of $W$ such that the break of radial symmetry can be justified while the energy minimizers are not discrete. 

The major difficulty to rigorously study the break of radial symmetry is that the convexity techniques cannot be directly applied. In fact, for a radial potential $W(\bx) = w(|\bx|)$, if the minimizer is unique (up to translation), then it is automatically radial, because otherwise a rotation of a minimizer will give a different minimizer. There have been some efforts in using the contrary of convexity to study the break of radial symmetry. \cite{BKSUB,KSUB} gave a bifurcation analysis for the break of a ring-shaped minimizer into 3 pieces, for $W(\bx)=\frac{|\bx|^a}{a}-\frac{|\bx|^b}{b}$. \cite{CS21} showed that a large region of negative values of $\hat{W}$ will prohibit the minimizers to have large clusters. However, these approaches do not lead to a rigorous result that break of radial symmetry must happen.

In this paper we propose a simple strategy to rigorously prove the break of radial symmetry.
\begin{lemma}\label{lem_radmin}
	Under the same assumptions as Theorem \ref{thm_exist2}, further assume that $d\ge 2$ and $W$ is radial. If there exists $\rho_*\in \cM(\mathbb{R}^d)$ such that
	\begin{equation}
		E[\rho_*] < \inf_{\rho\in \cM(\mathbb{R}^d),\,\textnormal{radial}} E[\rho]\,,
	\end{equation}
	then no minimizer of $E$ is radially symmetric.
\end{lemma}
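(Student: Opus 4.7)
The plan is a direct proof by contradiction, relying only on the definitions of infimum and of minimizer.

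Suppose for contradiction that some minimizer $\rho_{\min}$ of $E$ is radially symmetric. Then two inequalities hold simultaneously:
\begin{equation}
\inf_{\rho \in \cM(\mathbb{R}^d),\,\textnormal{radial}} E[\rho] \;\le\; E[\rho_{\min}] \;\le\; E[\rho_*].
\end{equation}
The first holds because $\rho_{\min}$ itself is an admissible candidate in the infimum on the left; the second because $\rho_{\min}$ minimizes $E$ over all of $\cM(\mathbb{R}^d)$ and $\rho_*\in\cM(\mathbb{R}^d)$. Chaining them gives $\inf_{\rho\,\textnormal{radial}} E[\rho] \le E[\rho_*]$, which contradicts the standing hypothesis $E[\rho_*] < \inf_{\rho\,\textnormal{radial}} E[\rho]$. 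If no minimizer of $E$ exists in the first place, the conclusion ``no minimizer is radially symmetric'' is vacuously true, so the existence-type hypotheses inherited from Theorem \ref{thm_exist2} serve only to make the statement nontrivial.

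The dimensional assumption $d\ge 2$ and the radiality of $W$ are also not invoked in the derivation itself; they are imposed only so that the concept of a ``radial minimizer'' is meaningful and the symmetry-breaking question is genuinely at stake. I therefore expect no obstacle in the proof of this particular lemma --- its content is essentially tautological and amounts to a rearrangement of the defining inequalities of a minimizer and an infimum.

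The real difficulty, which this lemma does not itself address, is in verifying the quantitative hypothesis $E[\rho_*]<\inf_{\rho\,\textnormal{radial}} E[\rho]$ for concrete potentials $W$: one must (i) establish a sufficiently sharp \emph{lower bound} for the radial infimum $\inf_{\rho\,\textnormal{radial}} E[\rho]$ and (ii) exhibit a non-radial competitor $\rho_*$ whose energy undercuts that bound. The lemma packages these two ingredients into a clean symmetry-breaking conclusion, and the substantive analysis supplying (i) and (ii) for the attractive-repulsive potentials considered in the paper is presumably where the main technical work lies.
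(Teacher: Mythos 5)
Your proof is correct and is exactly the straightforward argument the paper has in mind (the paper omits the proof, calling it straightforward): a radial minimizer would force $\inf_{\rho\,\textnormal{radial}} E[\rho] \le E[\rho_{\min}] \le E[\rho_*]$, contradicting the hypothesis. Your remarks about the vacuous case and about where the real work lies are also accurate.
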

Although the proof of this lemma is straightforward, it allows us to justify the break of radial symmetry via the following two steps:
\begin{itemize}
	\item Give a lower bound of $\inf_{\rho\in \cM(\mathbb{R}^d),\,\textnormal{radial}} E[\rho]$.
	\item Construct $\rho_*\in \cM(\mathbb{R}^d)$ whose energy is less than the above lower bound.
\end{itemize}

Our main result (Theorem \ref{thm_nonradex}) states that there exists a radial potential $W$, smooth on $\mathbb{R}^d\backslash \{0\}$, repulsive at short distance and attractive at long distance, such that every minimizer of $E$ is a H\"older continuous function which is not radially symmetric.

To prove this result, we start in Sections 2 and 3 by analyzing $\inf_{\rho\in \cM(\mathbb{R}^d),\,\textnormal{radial}} E[\rho]$ carefully for a simple potential, giving a lower bound, and show that $\rho_*$ being the Dirac masses at the vertices of a unit simplex has energy less than this lower bound. Then, in Section 4, we use a comparison argument to show that for any potential which is slightly larger than this special one, break of radial symmetry also happens. Among this class of potentials, we construct one that satisfies the prescribed properties (where the H\"older continuity of minimizer is obtained by using \cite{CDM}).

\section{Energy for radial measures}
Assume $d\ge 2$ and consider a radial potential $W(\bx)=w(|\bx|)$ satisfying {\bf (W0)}. Take a radial $\rho\in \cM(\mathbb{R}^d)$. Then there exists $\tilde{\rho}\in\cM([0,\infty))$ such that
\begin{equation}
	\rho = \int_{[0,\infty)}\delta_{(t)}\rd{\tilde{\rho}(t)}\,,
\end{equation}
where $\delta_{(t)},\,t>0$ denotes the uniform distribution on $\partial B(0;t)$ with total mass 1, and $\delta_{(0)}$ denotes the Dirac mass at 0. Then for $r,s\ge 0$, we define
\begin{equation}
	\tilde{W}(r,s) := \int_{\mathbb{R}^d}\int_{\mathbb{R}^d}W(\bx-\by)\rd{\delta_{(s)}(\by)}\rd{\delta_{(r)}(\bx)}\,.
\end{equation}
It is clear that $\tilde{W}(r,s)=\tilde{W}(s,r)$, $\tilde{W}(r,0)=w(r)$. Then the energy for a radial $\rho$ is
\begin{equation}
	E[\rho] = \frac{1}{2}\int_{[0,\infty)}\int_{[0,\infty)}\tilde{W}(r,s)\rd{\tilde{\rho}(s)}\rd{\tilde{\rho}(r)}\,.
\end{equation}

For $r\ge s> 0$, we calculate
\begin{equation}\begin{split}
		\tilde{W}(r,s)  
		= & \frac{1}{|S^{d-1}|}\int_{S^{d-1}} w(|r\vec{e}_1-s\hat{\by}|)\rd{S(\hat{\by})} \\
		= & \frac{1}{|S^{d-1}|}\int_0^\pi \int_{S^{d-2}} w(|r\vec{e}_1-s(\cos\phi,\by'\sin\phi)|)\rd{S(\by')} \sin^{d-2}\phi\rd{\phi} \\
		= & \frac{1}{|S^{d-1}|}\int_0^\pi \int_{S^{d-2}} w\Big(\sqrt{r^2-2rs\cos\phi+s^2}\Big)\rd{S(\by')} \sin^{d-2}\phi\rd{\phi} \\
		= & \frac{|S^{d-2}|}{|S^{d-1}|}\int_0^\pi w\Big(\sqrt{r^2-2rs\cos\phi+s^2}\Big)\sin^{d-2}\phi\rd{\phi}\,,
\end{split}\end{equation}
where $\vec{e}_1=(1,0,\dots,0)\in\mathbb{R}^d$, $\rd{S(\cdot)}$ denotes the surface measure on a sphere, and we used the spherical coordinates $\hat{\by} = (\cos\phi,\by'\sin\phi)$. Then, using the change of variables
\begin{equation}
	t = \sqrt{r^2-2rs\cos\phi+s^2},\, \phi = \cos^{-1}\Big(\frac{r^2+s^2-t^2}{2rs}\Big),\, \frac{\rd{\phi}}{\rd{t}} = \Big(1-\big(\frac{r^2+s^2-t^2}{2rs}\big)^2\Big)^{-1/2}\cdot \frac{t}{rs}\,,
\end{equation}
we get
\begin{equation}\begin{split}
		\tilde{W}(r,s) = & \int_0^\infty K_{r,s}(t) w(t)\rd{t}\,,
\end{split}\end{equation}
where the kernel function $K_{r,s}(t)$ is given by
\begin{equation}
	K_{r,s}(t) = \frac{|S^{d-2}|}{|S^{d-1}|}\Big(1-\big(\frac{r^2+s^2-t^2}{2rs}\big)^2\Big)^{(d-3)/2} \cdot \frac{t}{rs}\cdot \chi_{(r-s,r+s)}(t)\,.
\end{equation}
It is clear that $\int_0^\infty K_{r,s}(t) \rd{t}=1$. 

\section{A prototype potential}

Let $0<\epsilon<1$. Define
\begin{equation}
	W_\epsilon(\bx) = w_\epsilon(|\bx|) = \left\{\begin{split}
		-1,& \quad \big| |\bx|-1\big| \le \epsilon \\
		0,& \quad \textnormal{otherwise} \\
	\end{split}\right.\,,
\end{equation}
which satisfies the assumptions of Theorem \ref{thm_exist2}, and the corresponding energy is denoted as $E_\epsilon$. We first give a lower bound of $E_\epsilon$ for radial probability measures.
\begin{lemma}\label{lem_Weps}
	Assume $d\ge 2$. For $W_\epsilon$ given above,
	\begin{equation}
		\inf_{\rho\in \cM(\mathbb{R}^d),\,\textnormal{radial}} E_\epsilon[\rho] \ge -\frac{1}{4} \Big(1- \frac{1}{2}\sup_{r\ge s \ge \frac{1-\epsilon}{2}} \int_{1-\epsilon}^{1+\epsilon}K_{r,s}(t)\rd{t}\Big)^{-1}\,.
	\end{equation}
\end{lemma}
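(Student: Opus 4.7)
The plan is to expand $E_\epsilon[\rho]$ using the formula from Section~2 and to control the resulting double integral by partitioning the radial support of $\tilde\rho$ at the threshold $(1-\epsilon)/2$. Set $P(r,s) := \int_{1-\epsilon}^{1+\epsilon}K_{r,s}(t)\rd{t}$ for $r,s>0$, with the natural convention $P(r,0) = \chi_{[1-\epsilon,1+\epsilon]}(r)$ coming from $\tilde W(r,0) = w_\epsilon(r)$. Then $\tilde W_\epsilon(r,s) = -P(r,s)$, and
\[ -2 E_\epsilon[\rho] = \int_0^\infty\!\!\int_0^\infty P(r,s)\rd{\tilde\rho(r)}\rd{\tilde\rho(s)}. \]
Because $K_{r,s}$ is a probability density one always has $P\in[0,1]$, and because $K_{r,s}$ is supported in $(|r-s|,r+s)$, one obtains the crucial vanishing $P(r,s)=0$ whenever $r+s<1-\epsilon$, and in particular whenever both $r,s<(1-\epsilon)/2$.

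This vanishing motivates decomposing $[0,\infty) = I_s \cup I_\ell$ with $I_s = [0,(1-\epsilon)/2)$ and $I_\ell = [(1-\epsilon)/2,\infty)$, and setting $A := \tilde\rho(I_s)$. I would split the double integral into the three pieces $I_s\times I_s$, $(I_s\times I_\ell)\cup(I_\ell\times I_s)$, and $I_\ell\times I_\ell$, and estimate each: the first is $0$ by the observation above; the second is bounded via the crude estimate $P\le 1$ by $2A(1-A)$; the third is bounded using the definition of $M$ (together with the symmetry $P(r,s)=P(s,r)$, which extends the supremum from $r\ge s$ to all ordered pairs in $I_\ell\times I_\ell$) by $2M(1-A)^2$. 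Summing these yields $-2 E_\epsilon[\rho] \le f(A) := 2A(1-A) + 2M(1-A)^2$ for every radial $\rho$.

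It then suffices to maximize $f$ over $A\in[0,1]$. The substitution $u=1-A$ converts $f$ into the concave quadratic $2u - 2(1-M)u^2$, whose critical point is $u^* = 1/(2(1-M))$; since $P\le 1$ forces $M\le 1/2$, we have $u^*\in[0,1]$, so the maximum value is $f(u^*) = 1/(2(1-M))$. This gives $-2 E_\epsilon[\rho] \le 1/(2(1-M))$ for every radial $\rho$, which is equivalent to the claimed lower bound after taking the infimum.

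I do not foresee a serious obstacle in executing this plan: the proof reduces to elementary bookkeeping once the cutoff $(1-\epsilon)/2$ is chosen, the underlying idea being that two concentric spheres whose radii sum to less than $1-\epsilon$ cannot contribute any mass to the support of $-w_\epsilon$. The only minor technicality is handling the case $s=0$, where the explicit formula for $K_{r,s}$ is not directly valid, but the convention $\tilde W(r,0)=w_\epsilon(r)$ slots seamlessly into the framework because $r=0\in I_s$, placing that case in the small-small or mixed regime.
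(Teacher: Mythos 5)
Your proposal is correct and follows essentially the same route as the paper's proof: the same split of the radial mass at the cutoff $(1-\epsilon)/2$, the same three-way decomposition with the small--small block vanishing because $K_{r,s}$ is supported in $(|r-s|,r+s)$, the crude bound $P\le 1$ on the cross term, and the same one-variable quadratic optimization (your $u=1-A$ is exactly the paper's $m$). The only cosmetic issue is that you never write down the definition of $M$; for the bookkeeping to close (the factor $2M(1-A)^2$, the bound $M\le 1/2$, and the final constant) it must be $M=\tfrac12\sup_{r\ge s\ge\frac{1-\epsilon}{2}}\int_{1-\epsilon}^{1+\epsilon}K_{r,s}(t)\rd t$, which is what the paper calls $c_0$.
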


\begin{proof}
	For a radial $\rho$, we have
	\begin{equation}\begin{split}
			E_\epsilon[\rho] = & \frac{1}{2}\int_{[0,\infty)}\int_{[0,\infty)}\tilde{W}_\epsilon(r,s)\rd{\tilde{\rho}(s)}\rd{\tilde{\rho}(r)} \\
			= & \int_{[0,\frac{1-\epsilon}{2})}\int_{[\frac{1-\epsilon}{2},\infty)}\tilde{W}_\epsilon(r,s)\rd{\tilde{\rho}(s)}\rd{\tilde{\rho}(r)} + \frac{1}{2}\int_{[0,\frac{1-\epsilon}{2})}\int_{[0,\frac{1-\epsilon}{2})}\tilde{W}_\epsilon(r,s)\rd{\tilde{\rho}(s)}\rd{\tilde{\rho}(r)} \\
			& + \frac{1}{2}\int_{[\frac{1-\epsilon}{2},\infty)}\int_{[\frac{1-\epsilon}{2},\infty)}\tilde{W}_\epsilon(r,s)\rd{\tilde{\rho}(s)}\rd{\tilde{\rho}(r)} \,.\\
	\end{split}\end{equation}
	First notice that $\tilde{W}_\epsilon(r,s)\ge -1$ for any $r,s$ since $w_\epsilon\ge -1$. Therefore
	\begin{equation}\begin{split}
			\int_{[0,\frac{1-\epsilon}{2})}\int_{[\frac{1-\epsilon}{2},\infty)}\tilde{W}_\epsilon(r,s)\rd{\tilde{\rho}(s)}\rd{\tilde{\rho}(r)} \ge -m(1-m)\,,
	\end{split}\end{equation}
	where 
	\begin{equation}
		m = \tilde{\rho}\Big([\frac{1-\epsilon}{2},\infty)\Big)\,.
	\end{equation}
	Then notice that if $r,s\in [0,\frac{1-\epsilon}{2})$ then $r+s<1-\epsilon$, which implies $\tilde{W}_\epsilon(r,s)=0$. Finally, since $\tilde{W}_\epsilon(r,s)=-\int_{1-\epsilon}^{1+\epsilon}K_{r,s}(t)\rd{t}$ for any $r\ge s>0$, it is clear that
	\begin{equation}\begin{split}
			\frac{1}{2}\int_{[\frac{1-\epsilon}{2},\infty)}\int_{[\frac{1-\epsilon}{2},\infty)}\tilde{W}_\epsilon(r,s)\rd{\tilde{\rho}(s)}\rd{\tilde{\rho}(r)} \ge -c_0 m^2\,,
	\end{split}\end{equation}
	where $c_0=\frac{1}{2}\sup_{r\ge s \ge \frac{1-\epsilon}{2}} \int_{1-\epsilon}^{1+\epsilon}K_{r,s}(t)\rd{t} \in [0,\frac{1}{2}]$. Combining these, we get
	\begin{equation}\begin{split}
			E_\epsilon[\rho] \ge -m(1-m) -c_0 m^2 = (1-c_0)m^2 - m \ge -\frac{1}{4(1-c_0)}\,,
	\end{split}\end{equation}
	by minimizing in $m\in [0,1]$, which is the conclusion.
\end{proof}

Then we give an estimate on the supremum of a partial integral of $K_{r,s}$.
\begin{lemma}\label{lem_krs}
	Assume $d\ge 2$, $0<\epsilon<1$. Then $K_{r,s}$ satisfies the estimate
	\begin{equation}
		\sup_{r\ge s \ge \frac{1-\epsilon}{2}} \int_{1-\epsilon}^{1+\epsilon}K_{r,s}(t)\rd{t} \le \left\{\begin{split}
			\frac{22\sqrt{2}}{5\pi}\sqrt{\epsilon},& \quad \textnormal{for }d=2,\,\epsilon\le \frac{1}{11} \\
			\frac{|S^{d-2}|}{|S^{d-1}|}\cdot \frac{8}{(1-\epsilon)^2}\epsilon,& \quad \textnormal{for } d\ge 3 \\
		\end{split}\right.\,.
	\end{equation}
\end{lemma}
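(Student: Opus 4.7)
I would organize the proof by dimension, since the kernel behaves qualitatively differently when the exponent $(d-3)/2$ is nonnegative ($d\ge 3$) versus $-1/2$ ($d=2$, where integrable square-root singularities appear at $t=r\pm s$).

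For $d\ge 3$, I use the elementary bound $\big(1-\big((r^2+s^2-t^2)/(2rs)\big)^2\big)^{(d-3)/2}\le 1$, giving the pointwise inequality $K_{r,s}(t)\le \frac{|S^{d-2}|}{|S^{d-1}|}\cdot\frac{t}{rs}$. Integrating over $[1-\epsilon,1+\epsilon]$ yields $\frac{2\epsilon}{rs}$ times the prefactor, and the hypothesis $r\ge s\ge(1-\epsilon)/2$ forces $rs\ge(1-\epsilon)^2/4$, producing exactly the claimed $\frac{|S^{d-2}|}{|S^{d-1}|}\cdot\frac{8\epsilon}{(1-\epsilon)^2}$.

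For $d=2$ I would write $K_{r,s}(t)=\frac{2t}{\pi\sqrt{(t^2-p^2)(q^2-t^2)}}\chi_{(p,q)}(t)$ with $p=r-s$, $q=r+s$, and use the change of variables $u=t^2$ to obtain
\[
\int_{1-\epsilon}^{1+\epsilon}K_{r,s}(t)\rd t=\frac{1}{\pi}\int_{A}^{B}\frac{\rd u}{\sqrt{(u-p^2)(q^2-u)}},
\]
with $A=\max((1-\epsilon)^2,p^2)$ and $B=\min((1+\epsilon)^2,q^2)$. The hypothesis gives $q^2-p^2=4rs\ge(1-\epsilon)^2$, and for $\epsilon\le 1/11$ one has $(1-\epsilon)^2>4\epsilon$, which rules out the configuration where both singular endpoints $u=p^2,q^2$ sit inside the integration window. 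I then split into three sub-cases by the relative position of $[(1-\epsilon)^2,(1+\epsilon)^2]$ and $[p^2,q^2]$: an \emph{interior} case where the integrand is uniformly bounded and the integral is $O(\epsilon)$, easily below the $O(\sqrt\epsilon)$ threshold; a \emph{left-endpoint} case $p^2\in[(1-\epsilon)^2,(1+\epsilon)^2]$, where $p\ge 1-\epsilon$ combined with $2s\ge 1-\epsilon$ forces $q\ge 2(1-\epsilon)$, so that $\sqrt{q^2-u}$ stays bounded below on the integration domain and the estimate reduces to $\int_{p^2}^{(1+\epsilon)^2}(u-p^2)^{-1/2}\rd u=2\sqrt{(1+\epsilon)^2-p^2}\le 4\sqrt\epsilon$; and a symmetric \emph{right-endpoint} case, where $q\le 1+\epsilon$ together with $q-p\ge 1-\epsilon$ forces $p\le 2\epsilon$, so that $\sqrt{u-p^2}$ is bounded below and the $(q^2-u)^{-1/2}$ factor is integrated explicitly.

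The main obstacle is the quantitative case-by-case estimation for $d=2$: in each endpoint sub-case one must balance the integrable singularity at whichever endpoint of $[p^2,q^2]$ enters the integration window against the size of the other square-root factor. The lower bound $q^2-p^2\ge(1-\epsilon)^2$ from the hypothesis $s\ge(1-\epsilon)/2$, which separates the two singularities of $K_{r,s}$ by a definite amount independent of $\epsilon$, is exactly what makes this work. Once each case produces an explicit $C\sqrt\epsilon$ bound, verifying that the largest of these constants fits within $\frac{22\sqrt 2}{5\pi}$ for $\epsilon\in(0,1/11]$ is a routine numerical check.
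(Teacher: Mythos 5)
Your $d\ge 3$ argument is exactly the paper's: bound the Gegenbauer-type factor by $1$, integrate $t\,\rd t$, and use $rs\ge\big(\tfrac{1-\epsilon}{2}\big)^2$. For $d=2$ you also share the paper's framework (substitute $u=t^2$, exploit the separation $q^2-p^2=4rs\ge(1-\epsilon)^2\gg 4\epsilon$ of the two square-root singularities), and your two endpoint cases are correctly handled with workable constants. The gap is in your \emph{interior} case. If neither $p^2$ nor $q^2$ lies in $[(1-\epsilon)^2,(1+\epsilon)^2]$, the window can still sit inside $(p^2,q^2)$ arbitrarily close to a singular endpoint: take $s=\tfrac{1-\epsilon}{2}$ and $r=s+(1-\epsilon)-\delta$, so that $p^2=(1-\epsilon-\delta)^2<(1-\epsilon)^2$ with $\delta$ as small as you like, while $q>1+\epsilon$. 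Then $(u-p^2)^{-1/2}$ is of order $\delta^{-1/2}$ near $u=(1-\epsilon)^2$, the integrand is \emph{not} uniformly bounded, and the integral is genuinely of order $\sqrt{\epsilon}$ (as $\delta\to 0$ it tends to the same $2\sqrt{(1+\epsilon)^2-p^2}$-type quantity as your left-endpoint case), not $O(\epsilon)$. Worse, the naive repair --- bounding the far factor $\sqrt{q^2-u}$ below by a constant depending only on which half of $[p^2,q^2]$ the window occupies --- yields roughly $\tfrac{4}{\pi}\big(2rs-2\epsilon\big)^{-1/2}\sqrt{\epsilon}$, which at $\epsilon=\tfrac{1}{11}$, $rs=\tfrac{(1-\epsilon)^2}{4}$ exceeds the claimed constant $\tfrac{22\sqrt2}{5\pi}$. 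So the case split as stated does not close.

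The paper sidesteps this by a rearrangement observation that you are missing: the integrand $\big(1-\big(\tfrac{r^2+s^2-u}{2rs}\big)^2\big)^{-1/2}$ is symmetric about $u=r^2+s^2$ and increases toward the endpoints of its support $[(r-s)^2,(r+s)^2]$, and since the window length $4\epsilon$ is at most half the support length $4rs$, the integral over \emph{any} placement of the window is dominated by the integral over the window flush against $u=(r-s)^2$. For that extremal window one has $q^2-u\ge 4rs-4\epsilon\ge 2rs$, so only one singularity is active and the explicit computation gives $\tfrac{2\sqrt2}{\pi}\tfrac{\sqrt\epsilon}{\sqrt{rs}}\le\tfrac{22\sqrt2}{5\pi}\sqrt\epsilon$. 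If you want to keep your three-case structure, you must absorb the ``window near but not containing an endpoint'' configurations into the endpoint cases and track how $\int_A^B(u-p^2)^{-1/2}\rd u=2\big(\sqrt{B-p^2}-\sqrt{A-p^2}\big)$ shrinks as the window moves away from $p^2$ while $\sqrt{q^2-u}$ grows; the monotone-rearrangement argument is the clean way to do exactly that bookkeeping.
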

\begin{proof}
	\begin{equation}\label{krsdu}\begin{split}
			\int_{1-\epsilon}^{1+\epsilon}K_{r,s}(t)\rd{t} = & \frac{|S^{d-2}|}{|S^{d-1}|}\cdot \frac{1}{ rs}\int_{1-\epsilon}^{1+\epsilon}\Big(1-\big(\frac{r^2+s^2-t^2}{2rs}\big)^2\Big)_+^{(d-3)/2}t\rd{t}  \\
			= & \frac{|S^{d-2}|}{|S^{d-1}|}\cdot \frac{1}{ 2rs}\int_{(1-\epsilon)^2}^{(1+\epsilon)^2}\Big(1-\big(\frac{r^2+s^2-u}{2rs}\big)^2\Big)_+^{(d-3)/2}\rd{u}\,.  \\
	\end{split}\end{equation}
	If $d\ge 3$, then the last integrand is at most 1. Then for $r\ge s \ge \frac{1-\epsilon}{2}$,
	\begin{equation}\begin{split}
			\int_{1-\epsilon}^{1+\epsilon}K_{r,s}(t)\rd{t} \le \frac{|S^{d-2}|}{|S^{d-1}|}\cdot \frac{1}{ 2(\frac{1-\epsilon}{2})^2}\cdot 4\epsilon = \frac{|S^{d-2}|}{|S^{d-1}|}\cdot \frac{8}{(1-\epsilon)^2}\epsilon\,. \\
	\end{split}\end{equation}

	If $d=2$, then notice that the last integrand in \eqref{krsdu} is supported on $[(r-s)^2,(r+s)^2]$, increasing on $u\in [r^2+s^2,(r+s)^2)$ and symmetric about $u=r^2+s^2$. Also, for $\epsilon\le \frac{1}{11}$ and $r\ge s \ge \frac{1-\epsilon}{2}$, we have
	\begin{equation}
		rs \ge (\frac{1-\epsilon}{2})^2 \ge \frac{25}{121} > 2\epsilon\,,
	\end{equation}
	i.e., the length of the interval $((1-\epsilon)^2,(1+\epsilon)^2)$ is $4\epsilon$ which is less than $2rs$, the length of $[r^2+s^2,(r+s)^2)$. Therefore,  this integral can be estimated by 
	\begin{equation}\begin{split}
			& \int_{(1-\epsilon)^2}^{(1+\epsilon)^2}\Big(1-\big(\frac{r^2+s^2-u}{2rs}\big)^2\Big)_+^{-1/2}\rd{u} \le  \int_{(r-s)^2}^{(r-s)^2+4\epsilon}\Big(1-\big(\frac{r^2+s^2-u}{2rs}\big)^2\Big)^{-1/2}\rd{u} \\
			= & \int_0^{4\epsilon} \Big(1-\big(\frac{r^2+s^2-u-(r-s)^2}{2rs}\big)^2\Big)^{-1/2}\rd{u} = \int_0^{4\epsilon} \big(\frac{u(4rs-u)}{4r^2s^2}\big)^{-1/2}\rd{u} \\
			\le & \int_0^{4\epsilon} \big(\frac{u\cdot 2rs}{4r^2s^2}\big)^{-1/2}\rd{u} = \sqrt{2rs}\cdot 2\sqrt{4\epsilon}\,.
	\end{split}\end{equation}
	Therefore we conclude  
	\begin{equation}\begin{split}
			\int_{1-\epsilon}^{1+\epsilon}K_{r,s}(t)\rd{t} \le & \frac{|S^{0}|}{|S^{1}|}\cdot \frac{1}{ 2rs} \cdot \sqrt{2rs}\cdot 2\sqrt{4\epsilon} = \frac{2\sqrt{2}}{\pi}\frac{1}{\sqrt{rs}} \sqrt{\epsilon} \le \frac{2\sqrt{2}}{\pi}\frac{2}{1-\epsilon} \sqrt{\epsilon} \le \frac{22\sqrt{2}}{5\pi}\sqrt{\epsilon}\,. \\
	\end{split}\end{equation}

\end{proof}

\begin{theorem}\label{thm_nonradeps}
	Assume $d\ge 2$. There exists $\epsilon_0>0$, depending on $d$, such that for any $\epsilon<\epsilon_0$, no minimizer of $E_\epsilon$ is radially symmetric.
\end{theorem}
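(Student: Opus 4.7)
The plan is to apply Lemma \ref{lem_radmin} with $\rho_*$ chosen as the uniform mass on the vertices of a regular unit simplex in $\mathbb{R}^d$.

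First, I combine Lemmas \ref{lem_Weps} and \ref{lem_krs} to obtain a quantitative lower bound on the energy of radial probability measures. Writing $c_0(\epsilon):=\frac{1}{2}\sup_{r\ge s\ge (1-\epsilon)/2}\int_{1-\epsilon}^{1+\epsilon}K_{r,s}(t)\rd{t}$, Lemma \ref{lem_krs} shows that $c_0(\epsilon)\to 0$ as $\epsilon\to 0$ (with rate $\sqrt{\epsilon}$ when $d=2$ and rate $\epsilon$ when $d\ge 3$), so the lower bound $-\frac{1}{4(1-c_0(\epsilon))}$ from Lemma \ref{lem_Weps} tends to $-\frac{1}{4}$ from below.

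Next, I take $\rho_*=\frac{1}{d+1}\sum_{i=1}^{d+1}\delta_{v_i}$, where $\{v_1,\dots,v_{d+1}\}\subset\mathbb{R}^d$ are the vertices of a regular simplex with all pairwise distances equal to $1$ (such a configuration exists because $d\ge 2$). Since $\epsilon<1$, we have $W_\epsilon(0)=0$, while $W_\epsilon(v_i-v_j)=-1$ for every $i\ne j$, so a direct computation gives $E_\epsilon[\rho_*]=-\frac{d}{2(d+1)}\le-\frac{1}{3}<-\frac{1}{4}$ for $d\ge 2$.

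Finally, I pick $\epsilon_0>0$ small enough (depending on $d$) so that Lemma \ref{lem_krs} forces $c_0(\epsilon)<\frac{d-1}{2d}$ whenever $\epsilon<\epsilon_0$; this is precisely the algebraic condition $-\frac{1}{4(1-c_0(\epsilon))}>-\frac{d}{2(d+1)}$. Combined with Lemma \ref{lem_Weps} this yields $E_\epsilon[\rho_*]<\inf_{\rho\ \textnormal{radial}}E_\epsilon[\rho]$. Because $E_\epsilon[\rho_*]<0=\frac{1}{2}W_\infty$, Theorem \ref{thm_exist2} guarantees the existence of a minimizer, and Lemma \ref{lem_radmin} then forces no minimizer of $E_\epsilon$ to be radially symmetric. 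I do not anticipate any real obstacle here: the construction is entirely explicit, the radial lower bound and the simplex energy are either already proved or immediate, and the only remaining task is to verify the one-variable inequality $c_0(\epsilon)<(d-1)/(2d)$, which follows quantitatively from Lemma \ref{lem_krs}.
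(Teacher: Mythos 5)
Your proposal is correct and follows essentially the same route as the paper: the same simplex measure $\rho_*$ with energy $-\frac{d}{2(d+1)}$, the same reduction to the condition $c_0(\epsilon)<\frac{d-1}{2d}$ via Lemma \ref{lem_Weps}, and the same use of Lemma \ref{lem_krs} to secure it for small $\epsilon$. The only difference is cosmetic: the paper additionally records explicit values of $\epsilon_0$ (e.g.\ $(\frac{5\pi}{44\sqrt 2})^2$ for $d=2$), which your qualitative argument does not need.
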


An explicit choice of $\epsilon_0$ is given in the proof.

\begin{proof}
	
	We construct $\rho_{*,0}\in\cM(\mathbb{R}^d)$, which is a sum of $d+1$ Dirac masses, each having mass $\frac{1}{d+1}$, located at the vertices of a simplex of side length 1. Then it is clear that
	\begin{equation}
		E_\epsilon[\rho_{*,0}] = \frac{1}{2}\sum_{i,j\in\{1,\dots,d+1\},\,i\ne j} -\frac{1}{(d+1)^2} = -\frac{d}{2(d+1)}\,.
	\end{equation}
	Therefore, if 
	\begin{equation}\label{krssmall}
		\sup_{r\ge s \ge \frac{1-\epsilon}{2}} \int_{1-\epsilon}^{1+\epsilon}K_{r,s}(t)\rd{t} < \frac{d-1}{d}\,,
	\end{equation}
	then Lemma \ref{lem_Weps} implies $\inf_{\rho\in \cM(\mathbb{R}^d),\,\textnormal{radial}} E_\epsilon[\rho] > E_\epsilon[\rho_*]$. Then by Lemma \ref{lem_radmin}, we have that no minimizer of $E_\epsilon$ is radially symmetric.
	
	The condition \eqref{krssmall} is clearly satisfied for sufficiently small $\epsilon$, due to Lemma \ref{lem_krs}. In fact, for $d=2$, one can take $\epsilon< \epsilon_0 =( \frac{5\pi}{44\sqrt{2}})^2$; for $d\ge 3$ one can take $\epsilon< \epsilon_0 = \frac{(d-1)|S^{d-1}|}{32d|S^{d-2}|}$ (where one notices that $\epsilon_0<\frac{1}{2}$ due to the fact that $\frac{|S^{d-1}|}{|S^{d-2}|} \le 2$ for $d\ge 3$). This finishes the proof. 
	
\end{proof}

\section{Break of radial symmetry for general potentials}

Let $\bx_{*,0},\dots,\bx_{*,d}$ denote the vertices of a simplex of side length 1. For $\eta>0$, denote
\begin{equation}
	\rho_{*,\eta} = \frac{1}{(d+1)|B(0;\eta)|}\sum_{i=0}^d \chi_{B(\bx_{*,i};\eta)} \in\cM(\mathbb{R}^d)\,,
\end{equation}
as a mollified version of $\rho_{*,0}$ in the proof of Theorem \ref{thm_nonradeps}. Notice that for any $\epsilon<\frac{1}{2}$,
\begin{equation}\label{Eepsrhoeta}
	E_\epsilon[\rho_{*,\eta}] = E_\epsilon[\rho_{*,0}] = -\frac{d}{2(d+1)},\quad \forall \eta\le \frac{\epsilon}{2}\,.
\end{equation}
This is because if $\bx,\by\in B(\bx_{*,i};\eta)$ then $|\bx-\by| < \epsilon < 1-\epsilon$ and $W_\epsilon(\bx-\by)=0$; if $\bx\in B(\bx_{*,i};\eta),\,\by\in B(\bx_{*,j};\eta)$ with $i\ne j$ then $1-\epsilon < |\bx-\by| < 1+\epsilon$ and $W_\epsilon(\bx-\by)=-1$. Fix $\epsilon<\epsilon_0$. For a general potential $W$ satisfying $W(\bx)\ge W_\epsilon(\bx),\,\forall \bx$, we may use $\rho_{*,\eta}$ as a candidate of $\rho_*$ to break the radial symmetry of minimizers.

\begin{theorem}\label{thm_nonradgen}
	Assume $d\ge 2$. Fix $\epsilon<\epsilon_0$, where $\epsilon_0$ is as in Theorem \ref{thm_nonradeps}. Suppose $W=W_\epsilon+W_1$ with $W_1\ge 0$ being radial. Suppose $W$ satisfies the assumptions of Theorem \ref{thm_exist2}, and 
	\begin{equation}\label{thm_nonradgen_1}
		\sup_{|\bx|<1+\eta} \frac{1}{|B(0;\eta)|}\int_{B(\bx;\eta)} W_1(\by)\rd{\by} < \frac{d}{d+1} - \frac{1}{2} \Big(1- \frac{1}{2}\sup_{r\ge s \ge \frac{1-\epsilon}{2}} \int_{1-\epsilon}^{1+\epsilon}K_{r,s}(t)\rd{t}\Big)^{-1}\,,
	\end{equation}
	for some $\eta\in(0,\epsilon/2]$, then no minimizer of $E$ is radially symmetric.
\end{theorem}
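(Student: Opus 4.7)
The plan is to verify Lemma \ref{lem_radmin} directly with $\rho_*=\rho_{*,\eta}$. This requires (i) a lower bound on $\inf_{\rho\in\cM(\mathbb{R}^d),\,\textnormal{radial}}E[\rho]$ and (ii) an upper bound on $E[\rho_{*,\eta}]$, and the numerical gap in \eqref{thm_nonradgen_1} is precisely what separates the two quantities.

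For (i), I would observe that $W_1\ge 0$ gives $W\ge W_\epsilon$ pointwise, hence $E[\rho]\ge E_\epsilon[\rho]$ for every $\rho\in\cM(\mathbb{R}^d)$. Applying Lemma \ref{lem_Weps} to $E_\epsilon$ then yields
\begin{equation*}
\inf_{\rho\in\cM(\mathbb{R}^d),\,\textnormal{radial}}E[\rho]\ge -\frac{1}{4}\Big(1-\frac{1}{2}\sup_{r\ge s\ge (1-\epsilon)/2}\int_{1-\epsilon}^{1+\epsilon}K_{r,s}(t)\rd{t}\Big)^{-1}.
\end{equation*}

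For (ii), I would use the additive decomposition $E=E_\epsilon+E_1$ induced by $W=W_\epsilon+W_1$. By \eqref{Eepsrhoeta}, $E_\epsilon[\rho_{*,\eta}]=-d/(2(d+1))$ for every $\eta\le\epsilon/2$. For the $W_1$-piece, expand
\begin{equation*}
E_1[\rho_{*,\eta}]=\frac{1}{2(d+1)^2|B(0;\eta)|^2}\sum_{i,j=0}^{d}\int_{B(\bx_{*,i};\eta)}\int_{B(\bx_{*,j};\eta)}W_1(\bx-\by)\rd{\by}\rd{\bx},
\end{equation*}
and substitute $\bz=\bx-\by$ in each inner integral so that it becomes $\int_{B(\bx-\bx_{*,j};\eta)}W_1(\bz)\rd{\bz}$. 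The triangle inequality, together with $|\bx_{*,i}-\bx_{*,j}|=1$ for $i\ne j$ and $|\bx_{*,i}-\bx_{*,i}|=0$, shows that for any $\bx\in B(\bx_{*,i};\eta)$ the shifted center $\bx-\bx_{*,j}$ has modulus strictly less than $1+\eta$. Each of the $(d+1)^2$ pair contributions is therefore bounded by the quantity on the left of \eqref{thm_nonradgen_1}, and collecting the combinatorial factors gives
\begin{equation*}
E_1[\rho_{*,\eta}]\le\frac{1}{2}\sup_{|\bv|<1+\eta}\frac{1}{|B(0;\eta)|}\int_{B(\bv;\eta)}W_1(\bz)\rd{\bz}.
\end{equation*}

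Combining (i) and (ii), hypothesis \eqref{thm_nonradgen_1} is exactly the strict inequality
\begin{equation*}
E[\rho_{*,\eta}]<-\frac{1}{4}\Big(1-\frac{1}{2}\sup_{r\ge s\ge (1-\epsilon)/2}\int_{1-\epsilon}^{1+\epsilon}K_{r,s}(t)\rd{t}\Big)^{-1}\le\inf_{\rho\,\textnormal{radial}}E[\rho],
\end{equation*}
so Lemma \ref{lem_radmin} applies and no minimizer of $E$ is radial. The principal design choice, using the mollification $\rho_{*,\eta}$ in place of the sharp simplex measure, has already been absorbed into the statement so that the $W_1$-energy only sees averages over balls of radius $\eta$; what remains is the brief triangle-inequality check that the displacements all lie in the regime controlled by the hypothesis, and I do not expect any substantial obstacle beyond that bookkeeping.
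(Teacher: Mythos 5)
Your proposal is correct and follows essentially the same route as the paper: the lower bound via $W\ge W_\epsilon$ and Lemma \ref{lem_Weps}, the splitting $E[\rho_{*,\eta}]=E_\epsilon[\rho_{*,\eta}]+E_1[\rho_{*,\eta}]$ with \eqref{Eepsrhoeta}, and the triangle-inequality bound $|\bx-\bx_{*,j}|<1+\eta$ to control each pair contribution by the supremum in \eqref{thm_nonradgen_1}. The bookkeeping of the $(d+1)^2$ terms and the factor $\frac{1}{2}$ matches the paper's computation exactly.
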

Here the RHS of \eqref{thm_nonradgen_1} is positive due to \eqref{krssmall} in the proof of Theorem \ref{thm_nonradeps}.

\begin{proof}
	Denote $E$ as the energy with potential $W$. Since $W_1\ge 0$, we have
	\begin{equation}
		\inf_{\rho\in \cM(\mathbb{R}^d),\,\textnormal{radial}} E[\rho] \ge \inf_{\rho\in \cM(\mathbb{R}^d),\,\textnormal{radial}} E_\epsilon[\rho] \ge -\frac{1}{4} \Big(1- \frac{1}{2}\sup_{r\ge s \ge \frac{1-\epsilon}{2}} \int_{1-\epsilon}^{1+\epsilon}K_{r,s}(t)\rd{t}\Big)^{-1}\,,
	\end{equation}
	by Lemma \ref{lem_Weps}. Denote the LHS of \eqref{thm_nonradgen_1} as $W_{1M}$. Then, by \eqref{Eepsrhoeta}, we have 
	\begin{equation}\begin{split}
			E[\rho_{*,\eta}] = & -\frac{d}{2(d+1)} + \frac{1}{2(d+1)^2}\sum_{i,j=0}^d \int_{\mathbb{R}^d} \Big(W_1*\frac{\chi_{B(\bx_{*,i};\eta)}}{|B(0;\eta)|} \Big)\frac{\chi_{B(\bx_{*,j};\eta)}}{|B(0;\eta)|}  \rd{\bx} \\
			\le & -\frac{d}{2(d+1)} + \frac{W_{1M}}{2(d+1)^2}\sum_{i,j=0}^d \int_{\mathbb{R}^d} \frac{\chi_{B(\bx_{*,j};\eta)}}{|B(0;\eta)|}  \rd{\bx} = -\frac{d}{2(d+1)} + \frac{W_{1M}}{2}\,, \\
	\end{split}\end{equation}
	where in the inequality we used the fact that for any $\bx\in B(\bx_{*,j};\eta)$, we have
	\begin{equation}\begin{split}
		\Big(W_1*\frac{ \chi_{B(\bx_{*,i};\eta)}}{|B(0;\eta)|}\Big)(\bx) = & \frac{1}{|B(0;\eta)|}\int_{B(\bx_{*,i};\eta)} W_1(\bx-\by)\rd{\by} \\
		= & \frac{1}{|B(0;\eta)|}\int_{B(\bx-\bx_{*,i};\eta)} W_1(\by)\rd{\by} \le W_{1M}\,.
	\end{split}\end{equation}
	since $|\bx-\bx_{*,i}|<1+\eta$. Therefore, under the assumption \eqref{thm_nonradgen_1}, we see that $E[\rho_{*,\eta}] < \inf_{\rho\in \cM(\mathbb{R}^d),\,\textnormal{radial}} E[\rho]$, and thus no minimizer of $E$ is  radial by Lemma \ref{lem_radmin}.
	
\end{proof}

As a special example for Theorem \ref{thm_nonradgen}, we give the following construction. Denote
\begin{equation}
	\psi(x) = \exp(-\frac{1}{1-x^2})\chi_{|x|<1}\,,
\end{equation}
as a smooth nonnegative even function whose support is $[-1,1]$ and decreasing on $[0,1]$, and
\begin{equation}
	\phi(x) = \frac{\int_{-\infty}^x \psi(y)\rd{y}}{\int_{-\infty}^\infty \psi(y)\rd{y}}\,,
\end{equation}
which is a smooth increasing function with $\phi=0$ on $(-\infty,-1]$ and $\phi=1$ on $[1,\infty)$. We also have $\phi''>0$ on $(-1,0)$ and $\phi''<0$ on $(0,1)$.
\begin{theorem}\label{thm_nonradex}
	Assume $d\ge 2$. Fix $\epsilon<\epsilon_0$, $d-2<s<d$. For sufficiently small $\alpha,\beta>0$ (with $\beta<\epsilon$), construct (with notation $r=|\bx|$)
	\begin{equation}\label{thm_nonradex_1}
		W(\bx) = \alpha r^{-s}(1-\phi(4r-7)) + \left\{\begin{split}
			0, & \quad r\le 1-\epsilon \\
			-\phi\Big(\frac{r-(1-\epsilon+\frac{\beta}{2})}{\beta/2}\Big),& \quad 1-\epsilon < r < 1-\epsilon+\beta \\
			-1, & \quad 1-\epsilon+\beta \le r \le 1+\epsilon-\beta \\
			-1+\frac{\phi(r-(2+\epsilon-\beta))}{\phi(-1+\beta)}, & \quad 1+\epsilon-\beta<r<3+\epsilon-\beta \\
			-1+\frac{1}{\phi(-1+\beta)}, & \quad r\ge 3+\epsilon-\beta
		\end{split}\right.\,,
	\end{equation}
	which is a radial function, satisfying the assumptions of Theorem \ref{thm_exist2}, smooth on $\mathbb{R}^d\backslash\{0\}$, repulsive at short distance and attractive at long distance. Every minimizer of $E$ with this potential $W$ is a H\"older continuous function which is not radially symmetric.
\end{theorem}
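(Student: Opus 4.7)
The plan is to verify that the potential $W$ constructed in \eqref{thm_nonradex_1} falls under the hypothesis of Theorem \ref{thm_nonradgen}, and then invoke \cite{CDM} for the H\"older continuity. First I would read off the structural properties directly from the piecewise formula: $W$ is visibly radial and glues smoothly on $\mathbb{R}^d\setminus\{0\}$ (because $\phi$ is smooth with $\phi\equiv 0$ on $(-\infty,-1]$ and $\phi\equiv 1$ on $[1,\infty)$, so each breakpoint matches to all orders); it blows up like $\alpha r^{-s}$ as $r\to 0$, hence it is repulsive at short range and locally integrable since $s<d$; its derivative is strictly positive on $(1+\epsilon-\beta, 3+\epsilon-\beta)$, hence attractive at long range; and $W_\infty = -1+1/\phi(-1+\beta)$ is a finite positive constant. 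Boundedness below by $-1$, lower-semicontinuity, and evenness then give \textbf{(W0)}, while the existence hypothesis of Theorem \ref{thm_exist2} will follow once I show $E[\rho_{*,\eta}]<0$.

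Next I would write $W = W_\epsilon + W_1$ and check $W_1\ge 0$ piece by piece. The repulsive term $\alpha r^{-s}(1-\phi(4r-7))$ is nonnegative everywhere. On the flat bottom $[1-\epsilon+\beta,1+\epsilon-\beta]$ the attractive part matches $W_\epsilon = -1$, so $W_1$ is just the repulsive piece. On the two transition bands of thickness $\beta$ inside $[1-\epsilon,1+\epsilon]$ the monotonicity of $\phi$ gives attractive part $\ge -1 = W_\epsilon$. Outside $[1-\epsilon,1+\epsilon]$, where $W_\epsilon\equiv 0$, one checks directly that the attractive part is $\ge 0$: it vanishes at the boundaries $r=1-\epsilon$ and $r=1+\epsilon$ by the chosen normalization $1/\phi(-1+\beta)$, and is monotone in the appropriate direction on each side.

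The main work is to verify \eqref{thm_nonradgen_1} for $\eta=\epsilon/2$ with $\alpha,\beta$ small. I would bound $W_1$ pointwise by $\alpha r^{-s} + \chi_{A_-} + \chi_{A_+}$, where $A_\mp$ are the two transition shells of thickness $\beta$ at radius $\approx 1\mp\epsilon$, and then control the two contributions to the averaged integral separately. For the Riesz-type term, radial rearrangement shows the supremum over $|\bz|<1+\eta$ of $\tfrac{1}{|B(0;\eta)|}\int_{B(\bz;\eta)} |\by|^{-s}\rd\by$ is attained at $\bz=0$ and equals $\tfrac{d}{(d-s)}\eta^{-s}$, which contributes $\tfrac{d\alpha}{(d-s)\eta^s}$. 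For each shell indicator, a slab-meets-ball estimate gives $|A_\pm \cap B(\bz;\eta)| \le C_d\beta\eta^{d-1}$ uniformly in $\bz$, contributing at most $C_d'\beta/\eta$. Since the right-hand side of \eqref{thm_nonradgen_1} is a fixed positive quantity depending only on $d$ and $\epsilon$ (positivity coming from \eqref{krssmall} in the proof of Theorem \ref{thm_nonradeps}), first choosing $\alpha$ small and then $\beta$ small makes \eqref{thm_nonradgen_1} hold. Theorem \ref{thm_nonradgen} then delivers non-radial-symmetry of every minimizer, and in particular $E[\rho_{*,\eta}] < 0 < W_\infty/2$, so Theorem \ref{thm_exist2} guarantees existence.

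For H\"older continuity, I would cite \cite{CDM}: the hypotheses there are met because $W$ is smooth on $\mathbb{R}^d\setminus\{0\}$, bounded below, has the required growth at infinity, and carries a Riesz-like repulsive singularity $\alpha r^{-s}$ at the origin with $d-2<s<d$; this forces every minimizer to be an $L^\infty$ function with a quantified H\"older exponent. The main obstacle I foresee is purely the quantitative bookkeeping behind \eqref{thm_nonradgen_1}: one must track the thresholds on $\alpha$ and $\beta$ through both the Riesz average (peaked near the origin) and the shell-ball bound (peaked near $|\bz|\approx 1\pm\epsilon$), making sure the estimates are genuinely uniform across the full range $|\bz|<1+\eta$ and that the slab approximation for $|A_\pm\cap B(\bz;\eta)|$ is valid for $\beta\le\eta$, rather than relying on the two worst cases coinciding.
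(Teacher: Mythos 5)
Your overall route is the same as the paper's: decompose $W=W_\epsilon+W_1$ with $W_1\ge 0$, verify \eqref{thm_nonradgen_1} at $\eta=\epsilon/2$ by splitting $W_1$ into the Riesz part (averaged via rearrangement, giving $O(\alpha\eta^{-s})$) and the two thickness-$\beta$ shells (giving $O(\beta)$), then invoke Theorem \ref{thm_nonradgen} and \cite{CDM}. That part is sound; your slab-meets-ball bound for the shells is even slightly sharper than the paper's crude $\int_{B(0;1+\epsilon)}W_3$, and your observation that the pointwise bound $W_1\le \alpha r^{-s}+\chi_{A_-}+\chi_{A_+}$ need only hold on $B(0;1+\epsilon)$ (where $W_3\le 1$; it is large for $r$ beyond $1+\epsilon$) is exactly the restriction that makes the estimate legitimate.

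The genuine gap is in your verification that $W$ is ``repulsive at short distance and attractive at long distance,'' which is itself part of the theorem's statement and means that $w'$ changes sign exactly once. Your claim that ``its derivative is strictly positive on $(1+\epsilon-\beta,3+\epsilon-\beta)$'' is false for the full potential: the Riesz term $\alpha r^{-s}(1-\phi(4r-7))$ has strictly negative derivative on all of $(0,2)$, while $w_2'(1+\epsilon-\beta)=0$ (the argument of $\phi$ equals $-1$ there, where $\phi'$ vanishes), so $w'(1+\epsilon-\beta)=-\alpha s(1+\epsilon-\beta)^{-s-1}<0$. Thus the transition radius $r_0$ lies strictly inside $(1+\epsilon-\beta,3+\epsilon-\beta)$, and one must rule out multiple sign changes in the overlap region where the decaying Riesz tail competes with the rising attractive ramp. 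The paper handles this by noting that $w$ is convex on $(1+\epsilon-\beta,\tfrac32)$ (both summands are convex there since $\tfrac32<2+\epsilon-\beta$ and $1-\phi(4r-7)\equiv 1$), so $w'$ has a unique zero $r_0$ in that interval once $w'(\tfrac32)>0$, and then separately checks $w'>0$ on $(\tfrac32,2]$ (where the cutoff $\phi(4r-7)$ is active) and $w'\ge 0$ for $r>2$; the conditions $w'(\tfrac32)>0$ and $w'>0$ on $(\tfrac32,2]$ are where smallness of $\alpha$ and $\beta$ enters, via \eqref{dw32}. Your argument as written does not establish uniqueness of the sign change and would need this convexity-plus-endpoint-sign step to be completed.
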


The construction in \eqref{thm_nonradex_1} is illustrated in Figure \ref{fig1}.

\begin{figure}[htp!]
	\begin{center}
		\includegraphics[width=0.99\textwidth]{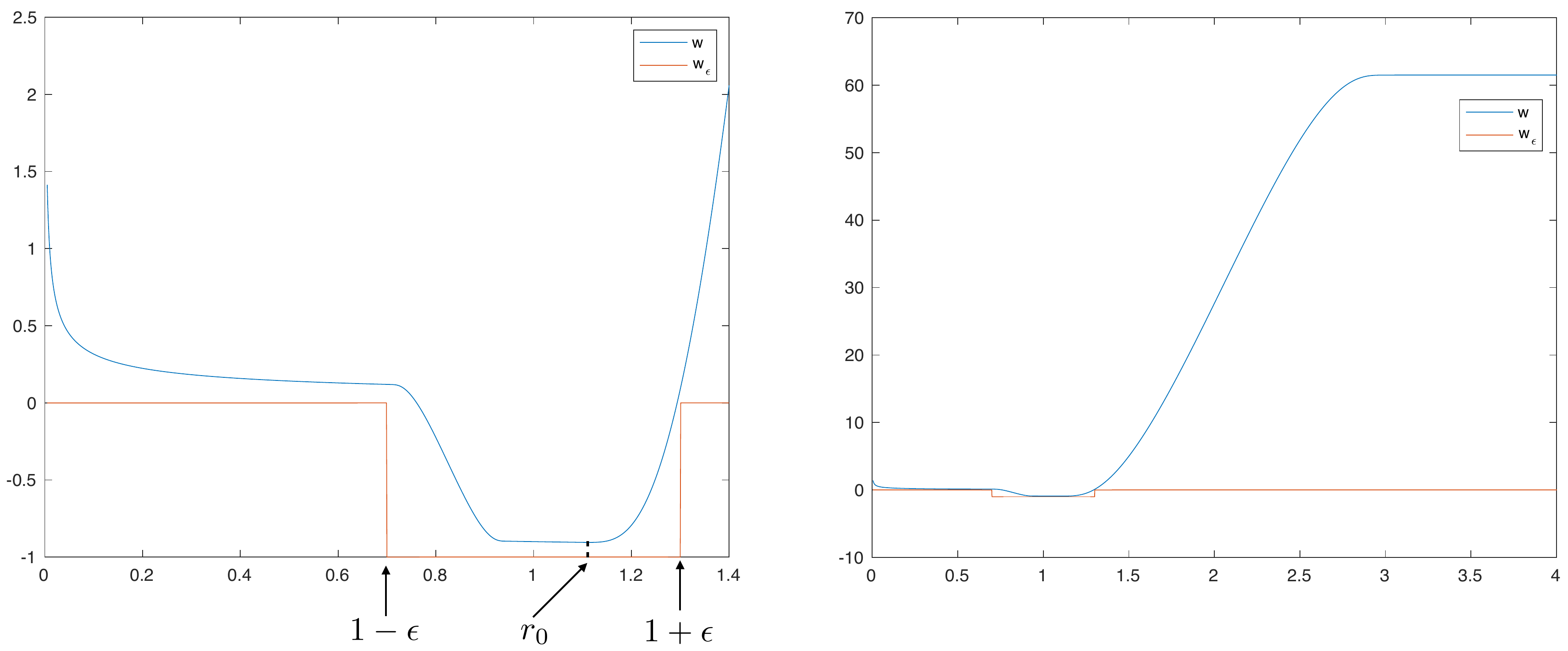}
		\caption{Illustration of the construction in \eqref{thm_nonradex_1}, as a function of $r$. For the clarity of illustration, the parameters $\epsilon,\alpha,\beta$ as in the picture may not be small enough to satisfy the assumptions of Theorem \ref{thm_nonradex}. }
		\label{fig1}
	\end{center}	
\end{figure}

\begin{proof}
	We first analyze the property of $W(\bx) = w(r) = \alpha r^{-s}(1-\phi(4r-7)) + w_2(r)$, where $w_2$ is given by the piecewisely-defined function as in \eqref{thm_nonradex_1}. One can easily check that at $r=1-\epsilon, 1-\epsilon+\beta, 1+\epsilon-\beta, 3+\epsilon-\beta$, the connections between pieces are smooth, and thus $W$ is smooth on $\mathbb{R}^d\backslash\{0\}$. On $1-\epsilon < r < 1-\epsilon+\beta$, $w_2$ is decreasing. On $1+\epsilon-\beta<r<3+\epsilon-\beta$, $w_2$ is increasing. Also notice that $w_2(1+\epsilon)=0$. It is clear that $W$ satisfies {\bf (W0)} with $\lim_{|\bx|\rightarrow\infty}W(\bx) = -1+\frac{1}{\phi(-1+\beta)}$ which is large if $\beta$ is small, and thus satisfies the assumptions of Theorem \ref{thm_exist2} for sufficiently small $\alpha,\beta$.
	
	Then we prove that $W$ is repulsive at short distance and attractive at long distance, i.e., there exists $r_0>0$ such that $w'(r)<0$ for $0<r<r_0$ and $w'(r)\ge 0$ for $r>r_0$. This is done by separating into several ranges of $r$.
	\begin{itemize}
		\item First notice that since $s>0$, we have $(r^{-s})' < 0$ and $(r^{-s})'' > 0$ for any $r>0$. Therefore $w'(r)<0$ for $0<r\le 1+\epsilon-\beta$ since $1+\epsilon-\beta<\frac{3}{2}$ and $1-\phi(4r-7)=1$ for such $r$. 
		\item Then notice that $w_2$ is convex on $(1+\epsilon-\beta,\frac{3}{2})$ since $\frac{3}{2}<2+\epsilon-\beta$. Therefore $w''(r)>0$ for $1+\epsilon-\beta<r<\frac{3}{2}$ since $1-\phi(4r-7)=1$ for such $r$. Since $w'(1+\epsilon-\beta)<0$ and 
		\begin{equation}\label{dw32}
			w'\Big(\frac{3}{2}\Big) = -\alpha s\Big(\frac{3}{2}\Big)^{-s-1} + \frac{\phi'(\frac{3}{2}-(2+\epsilon-\beta))}{\phi(-1+\beta)}  > 0\,,
		\end{equation}
		for sufficiently small $\alpha,\beta$, there exists a unique $r_0\in (1+\epsilon-\beta,\frac{3}{2})$ such that $w'(r_0)=0$, and $w'(r)<0$ for $1+\epsilon-\beta<r<r_0$, $w'(r)>0$ for $r_0<r\le \frac{3}{2}$. 
		\item A similar calculation as \eqref{dw32} (involving $\phi(4r-7)$ and its derivative) shows that $w'(r)>0$ for $\frac{3}{2}<r\le 2$ for sufficiently small $\alpha,\beta$.
		\item For $r>2$ it is clear that $w'(r)\ge 0$ because $w_2'(r)\ge 0$ and $1-\phi(4r-7)$ vanishes.
	\end{itemize}
	Therefore  $W$ is repulsive at short distance and attractive at long distance.
	
	Denote $W_2(\bx)=w_2(|\bx|)$. Then we write 
	\begin{equation}
		W = W_\epsilon + \alpha r^{-s}(1-\phi(4r-7)) + W_3\,,
	\end{equation}
	where $W_3 = W_2-W_\epsilon$ satisfies $\supp W_3 = \{\bx:1-\epsilon\le |\bx| \le 1-\epsilon+\beta\text{ or }|\bx|\ge 1+\epsilon-\beta\}$, with
	\begin{equation}
		0\le W_3(\bx) \le 1,\quad \forall 1-\epsilon\le |\bx| \le 1-\epsilon+\beta\text{ or }1+\epsilon-\beta \le |\bx| \le 1+\epsilon\,,
	\end{equation}
	where we used $w_2(1+\epsilon)=0$. Therefore $W_1 := W-W_\epsilon = \alpha r^{-s}(1-\phi(4r-7)) + W_3 \ge 0$. Then we estimate
	\begin{equation}\begin{split}
		 \frac{1}{|B(0;\eta)|}\int_{B(\bx;\eta)} & \alpha |\by|^{-s}(1-\phi(4|\by|-7))\rd{\by} \\
		\le & \frac{1}{|B(0;\eta)|}\int_{B(\bx;\eta)} \alpha |\by|^{-s}\rd{\by} \le \alpha\frac{1}{|B(0;\eta)|}\int_{B(0;\eta)}|\by|^{-s}\rd{\by} \\
		= & \alpha \frac{|S^{d-1}|}{|B(0;1)|}\eta^{-d} \int_0^\eta r^{-s+d-1}\rd{r} = \alpha \frac{|S^{d-1}|}{(d-s)|B(0;1)|}\eta^{-s}\,,
	\end{split}\end{equation}
	for any $\bx\in\mathbb{R}^d$. Also, provided $\eta\le \frac{\epsilon}{2}$ and $|\bx|<1+\eta$, we have
	\begin{equation}\begin{split}
		\frac{1}{|B(0;\eta)|}\int_{B(\bx;\eta)} & W_3(\by)\rd{\by} \le  \frac{1}{|B(0;\eta)|}\int_{B(0;1+\epsilon)} W_3(\by)\rd{\by} \\
		\le & \frac{|S^{d-1}|}{|B(0;\eta)|}\Big(\int_{1-\epsilon}^{1-\epsilon+\beta}r^{d-1}\rd{r} + \int_{1+\epsilon-\beta}^{1+\epsilon}r^{d-1}\rd{r}\Big) \\
		\le & \beta\frac{|S^{d-1}|}{|B(0;\eta)|}\cdot 2(1+\epsilon)^{d-1}\,.
	\end{split}\end{equation}
	Therefore, fixing the choice $\eta=\frac{\epsilon}{2}$, then for sufficiently small $\alpha,\beta$, \eqref{thm_nonradgen_1} is true, which implies that no minimizer of $E$ is radially symmetric by Theorem \ref{thm_nonradgen}. The fact that any minimizer is a H\"older continuous function is a consequence of \cite[Theorem 3.10]{CDM} since $d-2<s<d$ and $W(\bx)-\alpha|\bx|^{-s}$ is smooth on $\mathbb{R}^d$ and $W$ is constant for $|\bx|\ge 3+\epsilon-\beta$ (the condition (H2s) in \cite{CDM} can be made true by further modifying $W$ outside a large ball, which does not affect the minimizers due to the compact support property of any minimizer from \cite[Theorem 1.4]{CCP15}).
	
\end{proof}

\section*{Acknowledgement}

The author would like to thank Jiuya Wang for inspiring discussions. The author would like to thank Ryan W. Matzke for communications on the existing results on the break of symmetry for energy minimizers.

\bibliographystyle{alpha}
\bibliography{minimizer_book_bib.bib}

\newcommand{\etalchar}[1]{$^{#1}$}
\begin{thebibliography}{BCHC20}

\bibitem[BCHC20]{BCHC}
A.~Burchard, R.~Choksi, and E.~Hess-Childs.
\newblock On the strong attraction limit for a class of nonlocal interaction
  energies.
\newblock {\em Nonlinear Analysis}, 198:111844, 2020.

\bibitem[BCLR13]{BCLR13_2}
D.~Balagu{\'e}, J.~A. Carrillo, T.~Laurent, and G.~Raoul.
\newblock Nonlocal interactions by repulsive--attractive potentials: Radial
  ins/stability.
\newblock {\em Physica D}, 260:5--25, 2013.

\bibitem[BKS{\etalchar{+}}15]{BKSUB}
A.~L. Bertozzi, T.~Kolokolnikov, H.~Sun, D.~Uminsky, and J.~von Brecht.
\newblock Ring patterns and their bifurcations in a nonlocal model of
  biological swarms.
\newblock {\em Commun. Math. Sci.}, 13(4):955--985, 2015.

\bibitem[CCP15]{CCP15}
J.~A. Ca{\~{n}}izo, J.~A. Carrillo, and F.~S. Patacchini.
\newblock Existence of compactly supported global minimisers for the
  interaction existence of compactly supported global minimisers for the
  interaction energy.
\newblock {\em Arch. Ration. Mech. Anal.}, 217(3):1197--1217, 2015.

\bibitem[CDM16]{CDM}
J.~A. Carrillo, M.~G. Delgadino, and A.~Mellet.
\newblock Regularity of local minimizers of the interaction energy via obstacle
  problems.
\newblock {\em Comm. Math. Phys.}, 343(3):747--781, 2016.

\bibitem[CFP17]{CFP17}
J.~A. Carrillo, A.~Figalli, and F.~S. Patacchini.
\newblock Geometry of minimizers for the interaction energy with mildly
  repulsive potentials.
\newblock {\em Ann. IHP}, 34:1299--1308, 2017.

\bibitem[CFT15]{CFT15}
R.~Choksi, R.~C. Fetecau, and I.~Topaloglu.
\newblock On minimizers of interaction functionals with competing attractive
  and repulsive potentials.
\newblock {\em Ann. IHP}, 32:1283--1305, 2015.

\bibitem[CH17]{CH17}
J.~A. Carrillo and Y.~Huang.
\newblock Explicit equilibrium solutions for the aggregation equation with
  power-law potentials.
\newblock {\em Kinetic and Related Models}, 10(1), 2017.

\bibitem[CS23]{CS21}
J.~A. Carrillo and R.~Shu.
\newblock From radial symmetry to fractal behavior of aggregation equilibria
  for repulsive-attractive potentials.
\newblock {\em Calculus of Variations and Partial Differential Equations},
  62(1):28, 2023.

\bibitem[DLM22]{DLM1}
C.~Davies, T.~Lim, and R.~J. McCann.
\newblock Classifying minimum energy states for interacting particles:
  spherical shells.
\newblock {\em SIAM J. Appl. Math.}, 82(4):1520--1536, 2022.

\bibitem[DLM23]{DLM2}
C.~Davies, T.~Lim, and R.~J. McCann.
\newblock Classifying minimum energy states for interacting particles: Regular
  simplices.
\newblock {\em Comm. Math. Phys.}, 399(2):577--598, 2023.

\bibitem[FM]{FM}
R.~L. Frank and R.~W. Matzke.
\newblock Minimizers for an aggregation model with attactive-repulsive
  interaction.
\newblock {\em preprint, arXiv:2307.13769}.

\bibitem[Fra22]{Fra}
R.~L. Frank.
\newblock Minimizers for a one-dimensional interaction energy.
\newblock {\em Nonlinear Analysis}, 216:112691, 2022.

\bibitem[KSUB11]{KSUB}
T.~Kolokolnikov, H.~Sun, D.~Uminsky, and A.~L. Bertozzi.
\newblock Stability of ring patterns arising from two-dimensional particle
  interactions.
\newblock {\em Phys. Rev. E}, 84:015203, Jul 2011.

\bibitem[Lop19]{Lop19}
O.~Lopes.
\newblock Uniqueness and radial symmetry of minimizers for a nonlocal
  variational problem.
\newblock {\em Commun. Pure Appl. Anal.}, 18(5):2265--2282, 2019.

\bibitem[SST15]{SST15}
R.~Simione, D.~Slep\u{c}ev, and I.~Topaloglu.
\newblock Existence of ground states of nonlocal interaction energies.
\newblock {\em J. Stat. Phys.}, 159(4):972--986, 2015.

\bibitem[ST21]{ST}
R.~Shu and E.~Tadmor.
\newblock Newtonian repulsion and radial confinement: convergence towards
  steady state.
\newblock {\em Mathematical Models and Methods in Applied Sciences}, pages
  1--25, 2021.

\end{thebibliography}

\end{document}